\newcommand{\re}{\text{\rm Re\,}}
\newcommand{\im}{\text{\rm Im\,}}
\newcommand{\br}{{\mathbb{R}}}
\newcommand{\bz}{{\mathbb{Z}}}
\newcommand{\bc}{{\mathbb{C}}}
\newcommand{\csp}{{\mathcal{S}_p}}
\newcommand{\cspp}{{\mathcal{S}_{2p}}}
\renewcommand{\a}{\alpha}
\renewcommand{\b}{\beta}
\renewcommand{\l}{\lambda}
\newcommand{\s}{\sigma}
\newcommand{\dd}{\Delta}
\renewcommand{\o}{\omega}
\newcommand{\op}{{\omega}}
\newcommand{\g}{\gamma}
\renewcommand{\gg}{\Gamma}
\newcommand{\nt}{\noindent}
\newcommand{\bsl}{\backslash}
\newcommand{\ovl}{\overline}
\newcommand{\lp}{\left(}
\newcommand{\rp}{\right)}
\newcommand{\lt}{\left\{}
\newcommand{\rt}{\right\}}
\DeclareMathOperator{\di}{\rm d}
\numberwithin{equation}{section}
\newtheorem{theorem}{Theorem}[section]
\newtheorem{prop}[theorem]{Proposition}
\newtheorem{lemma}[theorem]{Lemma}
\newtheorem{corollary}[theorem]{Corollary}
\theoremstyle{definition}
\begin{document}

\title[Infinite band Schr\"odinger operators]
{On non-selfadjoint perturbations of  infinite band Schr\"odinger
operators and Kato method}
\author[L. Golinskii, S. Kupin]{L. Golinskii and S. Kupin}

\address{Mathematics Division, Institute for Low Temperature Physics and
Engineering, 47 Lenin ave., Kharkov 61103, Ukraine}
\email{golinskii@ilt.kharkov.ua}

\address{IMB, Universit\'e de Bordeaux, 351 cours de la Lib\'eration, 33405 Talence Cedex, France}
\email{skupin@math.u-bordeaux1.fr}

\date{\today}

\keywords{Schr\"odinger operators,  infinite band spectrum, Lieb--Thirring type inequalities,
resolvent identity, distortion}
\subjclass[2010]{Primary: 35P20; Secondary: 35J10, 47A75, 47A55}

\maketitle

\begin{center}
{\it Dedicated to Jean Esterle\\
on occasion of  his 70-th birthday}

\medskip
\end{center}

\begin{abstract}
Let $ H_0=-\dd+V_0 $ be a multidimensional Schr\"odinger ope\-rator with a real-valued potential 
and infinite band spectrum, and $H=H_0+V$ be its non-selfadjoint perturbation defined with the help of Kato approach.  We prove Lieb--Thirring type inequalities for the discrete spectrum of $H$ in the case 
when $V_0\in L^\infty(\br^d)$ and $V\in L^p(\br^d)$, $p>\max(d/2, 1)$.
\end{abstract}

\section*{Introduction}
\label{s0} The distribution of the discrete spectrum for a complex
perturbation of a model differential self-adjoint operator ({\it e.g.}, a Laplacian on $\br^d$,
a discrete Laplacian on $\bz^d$, etc.) were studied, for instance, in Frank--Laptev--Lieb--Seiringer \cite{fla}, Borichev--Golinskii--Kupin \cite{bgk}, Laptev--Safronov \cite{lasa2009}, Demuth--Hansmann--Katriel \cite{dhk}, Hansmann \cite{han2011} and Golinskii--Kupin \cite{gk2, gk3}.
Subsequent results in this direction can be found in Frank--Sabin \cite{frsa}, Frank--Simon \cite{frsi},  and Borichev--Golinskii--Kupin \cite{bgk2}. Similar techniques were applied to non-selfadjoint perturbations of other model operators of mathematical physics in Sambou \cite{sa1}, Dubuisson \cite{du1, du2}, Cuenin \cite{cue},  and Dubuisson--Golinskii--Kupin \cite{dgk}.

The present paper deals with the the case when the model self-adjoint Schr\"odinger operator with the bounded potential has an infinite band spectrum.
Consider a real-valued, measurable and bounded function $V_0$ on $\br^d$, $d\ge 1$, such that
the Schr\"odinger operator
\begin{equation}\label{e1}
H_0=-\dd+V_0
\end{equation}
is self-adjoint, $H_0^*=H_0$. We suppose throughout the paper that the
spectrum $\s(H_0)$ is {\it infinite band}, i.e.,
\begin{equation}\label{infband}
\s(H_0)=\s_{ess}(H_0)=I=\bigcup^\infty_{k=1} [a_k, b_k], \qquad a_k\to +\infty.
\end{equation}
With no loss of generality, it is convenient to assume,  that $a_1>0$. The gaps of the spectrum are called {\it relatively bounded} if
\begin{equation}\label{irelbound}
r=r(I):=\sup_k \frac{r_k}{b_k}<\infty,
\end{equation}
where  $r_k:=a_{k+1}-b_k$ is the length of $k$' gap in \eqref{infband}. For $d=1$, a generic example is a Hill operator with
a periodic potential (see \cite[Section XIII.16]{rs4}). It is well known (see \cite{maos})
that $r_k\to 0$ as $k\to\infty$ for potentials $V_0$ from $L^2$ on a period, and, consequently,  \eqref{irelbound}
obviously holds for these potentials.

Consider now
\begin{equation}\label{schrod}
H=H_0+V,
\end{equation}
where $V$ is a complex-valued potential, and the operator $H$ is defined by means of the Kato method. The Kato method is 
accepted nowadays to be the most powerful (compared to the operator and sum forms methods)
technique in abstract perturbation theory, see \cite{ka2, ge1}. It always works in
our cases of interest, and it is completely compatible with the operator and the form sums
whenever one or both of the latter are applicable.

A key feature of Kato's method is the following {\it resolvent identity}, see
\cite[Theorem 1.5, (2.3)]{ka2}, \cite[Lemma 2.2, (2.13)]{ge1}
\begin{equation}\label{resid}
R(z, H)=R(z, H_0)-\ovl{R(z,H_0)V_1}\cdot [I+\ovl{V_2R(z,H_0)V_1}]^{-1}\cdot V_2R(z,H_0), 
\end{equation}
where $R(z,T):=(T-z)^{-1}$ is the resolvent of a closed, linear operator $T$, $\ovl T$ denotes the operator closure of $T$, and $z$ lies in $\rho(H_0)\cap\rho(H)$, the intersection of the resolvent sets. 
Above,
\begin{equation}\label{factor}
V_1=|V|^{1/2}, \quad V_2={\rm sign} V\,|V|^{1/2}.
\end{equation}
As a matter of fact, in Kato's method the operator $H$ \eqref{schrod} is {\it defined} by
identity \eqref{resid}. If the difference $R(z,H)-R(z,H_0)$ is a compact operator at least for
one value of $z$, the celebrated theorem of H. Weyl (see, e.g., \cite[Corollary 11.2.3]{dav})
claims that $\s_{ess}(H)=\s_{ess}(H_0)$ and
$$ \s(H)=I\,\dot\cup\,\s_d(H) $$
where the discrete spectrum $\s_d(H)$ of $H$, i.e., the set of isolated
eigenvalues of finite algebraic multiplicity, can accumulate only on $I$. The symbol $\dot\cup$ stays for the disjoint union of two sets. 

The main purpose of this paper is to obtain certain quantitative information on the rate of the above accumulation to $\s_{ess}(H)$.
We require that the potentials at hand satisfy the following conditions:
\begin{equation}\label{hyp1}
V_0\in L^\infty(\br^d), \quad V\in L^p(\br^d), \quad p>\max(d/2, 1).
\end{equation}
Under these assumptions,  $H$ is a well-defined, closed and sectorial operator in $L^2(\br^d)$, and
$$ {\rm Dom}\,H={\rm Dom}\,H_0=W^{2,2}(\br^d). $$
Moreover, the resolvent difference appears to be compact.
Notice also, that for $V_0\equiv 0$ satisfying \eqref{hyp1}, the operator $H=-\dd+V$ is well defined as the form sum (cf. \cite[Section 6.1]{han2}).

Put $q:=1-d/2p>0$, and take $\o_0<0$ as
\begin{equation}\label{omega}
-\o_0=|\o_0|:=1+a_1+2\|V_0\|_\infty+(4\eta^2(p,d)\|V\|_p)^{1/q},
\end{equation}
see \eqref{tau} for the definition of the constant $\eta(p,d)$. Above, $a_1$ is the leftmost edge of $\s(H_0)$.

\begin{theorem}\label{t1} Let $H_0$ be an infinite band Schr\"odinger operator in $\br^d$, \newline $d\ge1$,
with relatively bounded spectrum $\eqref{infband}$-$\eqref{irelbound}$, and $V_0$, $V$
satisfy $\eqref{hyp1}$. Then, for $0<\tau<(q+1)p-1$
\begin{equation}\label{e104}
\sum_{z\in\s_d(H)} \frac{\di^p(z, I)}{(|\o_0|+|z|)^{d/2+\tau}} \le C(p,d,I,\tau)
\frac{\|V\|^p_p}{|\o_0|^\tau},
\end{equation}
where a positive constant $C(p,d,I,\tau)$ depends on $p,d,I$, and $\tau$.
\end{theorem}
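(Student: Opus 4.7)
The plan is to follow the now-standard scheme of \cite{bgk,gk2,gk3,dgk} for non-selfadjoint spectral bounds in the Kato framework: realize the discrete eigenvalues of $H$ as zeros of a regularized perturbation determinant, control its growth via a Kato--Seiler--Simon estimate, and extract the Lieb--Thirring inequality from that growth by a Blaschke-type theorem adapted to the geometry of $\bc\setminus I$.

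More concretely, let $m$ be the smallest integer with $m\ge 2p$. The hypothesis $p>\max(d/2,1)$ guarantees, via the Kato--Seiler--Simon inequality for the free Laplacian together with a resolvent (or form-) comparison absorbing the bounded potential $V_0$, that $\ovl{V_2 R(z,H_0) V_1}\in\cspp\subset\css_m$ for every $z\in\bc\setminus I$. I would therefore introduce the regularized perturbation determinant
\[
h(z) := {\det}_m\!\Bigl(I+\ovl{V_2 R(z,H_0) V_1}\Bigr), \qquad z\in\bc\setminus I,
\]
which is holomorphic on $\bc\setminus I$, with zeros coinciding (with algebraic multiplicity) with $\s_d(H)$ by Kato's identity \eqref{resid} and the standard properties of regularized determinants. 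The constant $\o_0$ in \eqref{omega} is calibrated so that $\|V_2 R(\o_0,H_0)V_1\|_{\cspp}\le 1/2$, which simultaneously ensures the invertibility required by \eqref{resid} at the reference point and provides a uniform lower bound $|h(\o_0)|\ge c>0$. Combining $\log|{\det}_m(I+A)|\le c_m\|A\|_{\css_m}^m$ with the Schatten estimate in $\cspp$ then yields a pointwise bound of the form
\[
\log|h(z)| \le C_1(p,d)\,\|V\|_p^p \cdot \frac{(|\o_0|+|z|)^{d/2}}{\di(z,I)^{2p}},
\]
so that $h$ lies in a class of holomorphic functions on $\bc\setminus I$ with prescribed singular behaviour along $I$ and polynomial growth at infinity.

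The final and most technical step is to convert this pointwise bound on $h$ into the summation inequality \eqref{e104}. This requires a Blaschke-type theorem on the multiply connected, unbounded domain $\bc\setminus I$: I would uniformize $\bc\setminus I$ by a conformal (or quasi-conformal) map onto a simpler reference domain, push the growth estimate forward, and apply a Jensen-type identity to count the zeros. The relative boundedness \eqref{irelbound} of the gaps is precisely what controls the distortion of the uniformizer and makes the polynomial weights $\di^p(z,I)/(|\o_0|+|z|)^{d/2+\t}$ emerge with the correct exponents; the admissible range $0<\t<(q+1)p-1$ is exactly the range for which the corresponding Blaschke-type integral converges at infinity.

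The main obstacle is therefore this last step, in the spirit of \cite{bgk2}: the first two are direct adaptations of the classical Gohberg--Krein and Kato--Seiler--Simon machinery, whereas the infinite, unbounded band structure of $I$ together with the singular growth of $h$ near $I$ make the distortion analysis of the uniformizing map the genuinely new analytical ingredient of the proof.
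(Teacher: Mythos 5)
Your strategy (regularized perturbation determinant plus a Blaschke-type zero-counting theorem on $\bc\setminus I$) is a genuinely different route from the one the paper takes, and it leaves the decisive step unproved. The entire weight of your argument rests on a ``Blaschke-type theorem on the multiply connected, unbounded domain $\bc\setminus I$'', which you describe only as uniformizing $\bc\setminus I$ onto a simpler reference domain and applying a Jensen-type identity. For an infinite band set $I=\bigcup_{k}[a_k,b_k]$ with $a_k\to+\infty$, the complement is an infinitely connected domain whose boundary is unbounded; no conformal map onto a tractable reference domain with controllable distortion is available, and the Blaschke-type conditions of \cite{bgk,bgk2} are established for the unit disk and for domains reducible to it, not for such a geometry. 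The relative boundedness \eqref{irelbound} by itself does not yield the distortion estimates a uniformizer would need, and you give no argument that the specific exponents $p$ and $d/2+\tau$, or the admissible range $0<\tau<(q+1)p-1$, would emerge from such a theorem. So the central analytic ingredient is missing, not merely deferred as a technicality; your first two steps (Kato--Seiler--Simon bounds and the determinant construction) are fine but they are not where the difficulty lies.

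The paper deliberately circumvents exactly this obstruction. In place of a determinant it uses Hansmann's operator-theoretic inequality \eqref{hans} applied to the \emph{bounded} resolvents $A_0=R(\o,H_0)$ and $A=R(\o,H)$ for real $\o\le\o_0$, with the Schatten bound on $R(\o,H)-R(\o,H_0)$ obtained from the resolvent identity \eqref{resid} and the Kato--Seiler--Simon estimate (Lemma \ref{l2}), much as in your second step. The geometry of $\bc\setminus I$ then enters only through the elementary distortion Lemma \ref{l1} for the single M\"obius map $\l_\o(z)=1/(z-\o)$, where \eqref{irelbound} is used in a fully explicit computation; this yields Proposition \ref{p1}, a one-parameter family of inequalities indexed by $\o$. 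The weight $(|\o_0|+|z|)^{d/2+\tau}$ and the constraint on $\tau$ in \eqref{e104} are then produced by integrating that family over $s=|\o|\in(s_0,\infty)$ --- the Demuth--Hansmann--Katriel ``convergence improving trick'' --- and not by any Blaschke condition. To make your approach work you would have to actually prove the Blaschke-type theorem for $\bc\setminus I$ with the stated weights; as written, the proof is incomplete.
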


\begin{corollary}\label{c1} Under assumptions of Theorem \ref{t1} we have
\begin{equation}\label{e1042}
\sum_{z\in\s_d(H)} \frac{\di^p(z,I)}{(1+|z|)^{d/2+\tau}}\le C'(1+|\o_0|)^{d/2}\,\|V\|^p_p
\end{equation}
and
\begin{equation}\label{e3031}
\sum_{z\in\s_d(H)} \frac{\di^p(z,I)}{(1+|z|)^{d/2+\tau}} \le C''
\bigl[(1+\|V_0\|_\infty)(1+\|V\|_p)\bigr]^{d/2q}\,\|V\|^p_p,
\end{equation}
where positive constants $C'=C'(p,d,I,\tau), \ C''=C''(p,d,I,\tau)$ depend on $p,d,I$, and $\tau$.
\end{corollary}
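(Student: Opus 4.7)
The plan is to derive both parts of the corollary directly from Theorem~\ref{t1} by elementary manipulations that exploit the explicit expression for $|\o_0|$ in \eqref{omega}. Neither inequality requires any new operator-theoretic input; the work is purely in rearranging the weights.

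For \eqref{e1042}, I would start from the observation that $|\o_0|\ge 1$, by the very definition \eqref{omega}. Hence for every $z\in\bc$,
\begin{equation*}
|\o_0|+|z|\le |\o_0|(1+|z|),
\end{equation*}
so that
\begin{equation*}
\frac{1}{(1+|z|)^{d/2+\tau}}\le \frac{|\o_0|^{d/2+\tau}}{(|\o_0|+|z|)^{d/2+\tau}}.
\end{equation*}
Multiplying by $\di^p(z,I)$, summing over $z\in\s_d(H)$, and invoking \eqref{e104}, the right-hand side becomes $C|\o_0|^{d/2+\tau}\cdot \|V\|_p^p/|\o_0|^\tau = C|\o_0|^{d/2}\|V\|_p^p$, which is dominated by $C'(1+|\o_0|)^{d/2}\|V\|_p^p$ as desired.

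For \eqref{e3031}, I would substitute the explicit formula \eqref{omega} into \eqref{e1042}. Since $a_1$ is fixed (part of the data of $I$) and the constants $\eta(p,d)$ depend only on $p,d$, there is a constant $C_0=C_0(p,d,I)$ with
\begin{equation*}
|\o_0|\le C_0\bigl(1+\|V_0\|_\infty+\|V\|_p^{1/q}\bigr).
\end{equation*}
The key elementary inequality is that, for $\beta\ge 1$ and $a\ge 0$, one has $1+a^\beta\le (1+a)^\beta$; since $1/q=2p/(2p-d)>1$ under \eqref{hyp1}, this yields $1+\|V\|_p^{1/q}\le (1+\|V\|_p)^{1/q}$. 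Using then $(1+x)(1+y)\ge 1+x+y$ for $x,y\ge 0$, one obtains
\begin{equation*}
|\o_0|\le C_0\,(1+\|V_0\|_\infty)(1+\|V\|_p)^{1/q}.
\end{equation*}
Finally, raising to the $d/2$ power and using $(1+\|V_0\|_\infty)^{d/2}\le (1+\|V_0\|_\infty)^{d/(2q)}$ (valid because $q\le 1$ and $1+\|V_0\|_\infty\ge 1$) converts the bound into the symmetric product $[(1+\|V_0\|_\infty)(1+\|V\|_p)]^{d/(2q)}$ of \eqref{e3031}.

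The main obstacle is bookkeeping rather than mathematical content: one needs to check carefully that the exponent comparisons ($q\le 1$, $1/q\ge 1$) follow from the standing assumption $p>\max(d/2,1)$, and to absorb all the $I$-dependent quantities ($a_1$, $\eta(p,d)$) into the final constants $C'$ and $C''$.
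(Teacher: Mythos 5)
Your argument is correct and follows essentially the same route as the paper: part one is obtained from Theorem \ref{t1} by comparing the weights via $|\o_0|+|z|\le|\o_0|(1+|z|)$ (the paper uses the equivalent $|z|+|\o_0|\le(1+|\o_0|)(1+|z|)$ together with $(1+|\o_0|)/|\o_0|\le2$), and part two by substituting the explicit formula \eqref{omega} for $|\o_0|$, using $1+\|V\|_p^{1/q}\le(1+\|V\|_p)^{1/q}$ and then absorbing the exponent $d/2$ on the $V_0$-factor into $d/2q$ exactly as in the paper. All the elementary exponent checks you flag ($q\le1$, $1/q>1$) do follow from \eqref{hyp1}, so there is no gap.
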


\section{Resolvent difference is in the Schatten--von Neumann class}
\label{s1}

The first key ingredient of the proof is the following result of Hansmann \cite[Theorem 1]{han1}.
Let $A_0=A_0^*$ be a bounded self-adjoint operator on the Hilbert space, $A$ a bounded operator with
$A-A_0\in\csp$, $p>1$. Then
\begin{equation}\label{hans}
\sum_{\l\in\s_d(A)} \di^p(\l, \s(A_0))\le K\|A-A_0\|_{\csp}^p,
\end{equation}
$K$ is an explicit (in a sense) constant, which depends only on $p$.

We are going to apply this result to
$$ A_0=A_0(\o)=R(\o,H_0), \quad A=A(\o)=R(\o,H),  
$$
where $\o\in\rho(H_0)\cap\rho(H)$ is an appropriate negative number. An operator-theoretic argument in this section gives the upper bound
of the right-hand side of \eqref{hans}. The lower bound for the left-hand side of \eqref{hans} is
obtained in the next section by using an elementary function-theoretic reasoning.

Generically, we are in the case $\o\le \o_0$, see \eqref{omega}.

\begin{lemma}\label{l2} Under assumptions $\eqref{hyp1}$ the following holds.
\begin{itemize}
\item[\it $i)$] for $\o<0$ and $q=1-(d/2p)>0$
\begin{equation}\label{e301}
\max\bigl(\|V_2R^{1/2}(\o,-\dd)\|_{\mathcal{S}_{2p}},\|\ovl{R^{1/2}(\o,-\dd)V_1}\|_{\mathcal{S}_{2p}}\bigr)
\le \eta(p,d) \lp\frac{\|V\|_p}{|\o|^q}\rp^{1/2}
\end{equation}
with
\begin{equation}\label{tau}
\eta(p,d):=\lt\frac{\gg\bigl(p-\frac{d}2\bigr)}{2^d\pi^{d/2}\gg(p)}\rt^{\frac1{2p}}.
\end{equation}

\item[\it $ii)$] for $\o<\o_0$,
\begin{equation}\label{e303}
\| \ovl{V_2 R(\o,H_0) V_1}\|\le \frac 12\,,
\end{equation}
and so
\begin{equation}\label{e3031}
\|(I+\ovl{V_2 R(\o,H_0) V_1})^{-1}\|\le 2.
\end{equation}

\item[\it $iii)$] for $\o<\o_0$,
\begin{equation}\label{e304}
||R(\o, H)-R(\o, H_0)||_\csp \le 4\eta^2(p,d)\, \frac{\|V\|_p}{|\o|^{q+1}}\,.
\end{equation}
\end{itemize}

\end{lemma}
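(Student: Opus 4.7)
I would handle the three parts in sequence, reducing (i) to a Kato--Seiler--Simon (KSS) estimate for $-\dd$ and then bootstrapping from $-\dd$ to $H_0$ in (ii)--(iii) via a self-adjoint operator inequality. The main technical point arises in (iii), where the bootstrap has to be carried out in Schatten norm.

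Part (i) is a direct application of the KSS bound $\|f(x)g(-i\nabla)\|_{\mathcal{S}_r}\le (2\pi)^{-d/r}\|f\|_{L^r}\|g\|_{L^r}$ with $r=2p$, $f=V_2$, and $g(\xi)=(|\xi|^2+|\o|)^{-1/2}$. One has $\|V_2\|_{L^{2p}}^{2p}=\|V\|_p^p$, and rescaling $\xi\to|\o|^{1/2}\xi$ reduces the $L^{2p}$ norm of $g$ to the standard integral $\int_{\br^d}(1+|\xi|^2)^{-p}d\xi=\pi^{d/2}\gg(p-d/2)/\gg(p)$, valid since $p>d/2$. Combining constants reproduces exactly $\eta(p,d)$ from \eqref{tau} times $|\o|^{-q/2}$, $q=1-d/(2p)$. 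The companion bound on $\ovl{R^{1/2}(\o,-\dd)V_1}$ follows from adjoint-invariance of Schatten norms.

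For part (ii), use $V_0\ge-\|V_0\|_\infty$ to get the form inequality $H_0-\o\ge -\dd-(\o+\|V_0\|_\infty)$, both sides being strictly positive once $\o<-\|V_0\|_\infty$ (guaranteed by $\o<\o_0$). Operator monotonicity of $t\mapsto 1/t$ on $(0,\infty)$ yields $R(\o,H_0)\le R(\o+\|V_0\|_\infty,-\dd)$, and a quadratic-form supremum then gives $\|V_2R^{1/2}(\o,H_0)\|\le\|V_2R^{1/2}(\o+\|V_0\|_\infty,-\dd)\|$ and the analogous bound for the other factor. Part (i), applied at the shifted argument, controls each by $\eta(p,d)\bigl(\|V\|_p/(|\o|-\|V_0\|_\infty)^q\bigr)^{1/2}$. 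Factoring $\ovl{V_2R(\o,H_0)V_1}=V_2R^{1/2}(\o,H_0)\cdot\ovl{R^{1/2}(\o,H_0)V_1}$ and multiplying operator norms gives $\|\ovl{V_2R(\o,H_0)V_1}\|\le\eta^2(p,d)\|V\|_p/(|\o|-\|V_0\|_\infty)^q$, which by the definition of $\o_0$ in \eqref{omega} is at most $1/4\le 1/2$; a Neumann series then closes out the bound on $(I+\cdot)^{-1}$.

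For part (iii), the resolvent identity \eqref{resid} and Schatten H\"older yield
\[
\|R(\o,H)-R(\o,H_0)\|_{\csp}\le\|\ovl{R(\o,H_0)V_1}\|_{\cspp}\cdot 2\cdot\|V_2R(\o,H_0)\|_{\cspp}.
\]
To estimate $\|V_2R(\o,H_0)\|_{\cspp}$, split $R(\o,H_0)=R^{1/2}(\o,H_0)\cdot R^{1/2}(\o,H_0)$, use $\|R^{1/2}(\o,H_0)\|=(a_1+|\o|)^{-1/2}\le|\o|^{-1/2}$, and then bound $\|V_2R^{1/2}(\o,H_0)\|_{\cspp}$ by the Schatten version of the reduction in (ii). This last step is the main technical obstacle: the quadratic-form argument that sufficed for operator norms does not transfer verbatim to $\cspp$. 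The remedy is to write $\|V_2R^{1/2}(\o,H_0)\|_{\cspp}^{2p}=\mathrm{tr}\bigl(|V|^{1/2}R(\o,H_0)|V|^{1/2}\bigr)^p$, using that $AB$ and $BA$ share their nonzero spectrum, and then apply the operator inequality $|V|^{1/2}R(\o,H_0)|V|^{1/2}\le|V|^{1/2}R(\o+\|V_0\|_\infty,-\dd)|V|^{1/2}$, which transfers to the ordering of eigenvalues and hence to the $p$-th trace. Multiplying the two outer factors, absorbing the $2$, and using $|\o|\ge 2\|V_0\|_\infty$ (so $(|\o|-\|V_0\|_\infty)^q\ge|\o|^q/2$ since $q<1$) produces the advertised bound $4\eta^2(p,d)\|V\|_p/|\o|^{q+1}$.
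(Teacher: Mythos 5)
Your argument is correct, and part (i) coincides with the paper's proof: the same Kato--Seiler--Simon bound applied to $V_2(x)g_\o(-i\nabla)$ with $g_\o(\xi)=(|\xi|^2+|\o|)^{-1/2}$, the same scaling, and the same constant $\eta(p,d)$. Where you genuinely diverge is in the passage from $-\dd$ to $H_0$ in parts (ii) and (iii). The paper establishes the explicit factorization $R(\o,H_0)=R^{1/2}\bigl(I+R^{1/2}V_0R^{1/2}\bigr)^{-1}R^{1/2}$ with $R=R(\o,-\dd)$ via a Neumann series (using $\|R^{1/2}V_0R^{1/2}\|\le\|V_0\|_\infty/|\o|\le 1/2$), and then rewrites every occurrence of $R(\o,H_0)$ through this identity, so that all Schatten norms are taken of $V_2R^{1/2}(\o,-\dd)$ directly and the middle factor only contributes an operator-norm bound of $2$. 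You instead keep $H_0$ throughout and compare it with a shifted free resolvent by operator monotonicity, $R(\o,H_0)\le R(\o+\|V_0\|_\infty,-\dd)$, transferring this to $\cspp$ via the $AB$/$BA$ trick together with the min--max monotonicity of eigenvalues under $0\le A\le B$. Both routes are sound. Yours requires $V_0$ to be real-valued (which it is by hypothesis) and the extra observation that the sandwiched operator inequality $|V|^{1/2}R(\o,H_0)|V|^{1/2}\le|V|^{1/2}R(\o+\|V_0\|_\infty,-\dd)|V|^{1/2}$ passes to traces of $p$-th powers; the paper's route is purely algebraic and would survive a complex bounded $V_0$. As a minor bonus, your bookkeeping (a factor $2$ from the Neumann series for the outer inverse times a factor $2$ from $(|\o|-\|V_0\|_\infty)^{-q}\le 2|\o|^{-q}$) lands exactly on the stated constant $4\eta^2(p,d)$ in \eqref{e304}, whereas the paper's chain, if one tracks the factor $2$ contributed by each of the two inner inverses $\bigl(I+R^{1/2}V_0R^{1/2}\bigr)^{-1}$, yields $8\eta^2(p,d)$.
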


\begin{proof}
To prove $i)$, write
$$ V_2R^{1/2}(\o,-\dd)=V_2(x)g_\o(-i\nabla), \quad g_\o(x)=(|x|^2-\o)^{-1/2}, \quad x\in\br^d. $$
By \cite[Theorem 4.1]{si1} (sometimes called the Birman--Solomyak (or Kato--Seilier--Simon) inequality)
\begin{equation}\label{birsol}
\|V_2R^{1/2}(\o,-\dd)\|_{\mathcal{S}_{2p}}\le (2\pi)^{-d/2p}\|V_2\|_{2p}\, \|g_\o\|_{2p}, \qquad p\ge 1.
\end{equation}
Of course, $\|V_j\|_{2p}=\|V\|^{1/2}_p$, and it is clear that
$$
\|g_\o\|^{2p}_{2p}= \|(|x|^2-\o)^{-1}\|^p_p=\frac1{|\o|^{p-d/2}}\,\int_{\br^d}\frac{dx}{(|x|^2+1)^p}<\infty
$$
for $p>\max({d}/2,1)$. The computation of the latter integral along with \eqref{birsol} gives
\begin{equation}\label{e307}
\|V_2R^{1/2}(\o,-\dd)\|_\cspp \le \eta(p,d) \lp \frac{\|V\|_p}{|\o|^q} \rp^{1/2}\,.
\end{equation}
The bound for $\|\ovl{R^{1/2}(\o,-\dd)V_1}\|_{\mathcal{S}_{2p}}$ is the same, since
$\ovl{R^{1/2}(\o,-\dd)V_1}=(V_1R^{1/2}(\o,-\dd))^*$.

Turning to $ii)$, let us begin with the following equality
\begin{equation}\label{re1}
R(\o,H_0)=R^{1/2}\bigl(I+R^{1/2}V_0R^{1/2}\bigr)^{-1}R^{1/2},
\end{equation}
where  $R:=R(\o,-\dd)$.
Indeed, it is clear that
\begin{equation}\label{re2}
\|R^{1/2}V_0R^{1/2}\|\le \frac{\|V_0\|_\infty}{|\o|}\le\frac12
\end{equation}
due to the choice of $\o_0$ \eqref{omega}, and so
\begin{equation}\label{re3}
\|\bigl(I+R^{1/2}V_0R^{1/2}\bigr)^{-1}\|\le2.
\end{equation}
Hence
\begin{equation*}
\begin{split}
\bigl(I+R^{1/2}V_0R^{1/2}\bigr)^{-1} &=\sum_{k=0}^\infty (-1)^k(R^{1/2}V_0R^{1/2})^k, \\
R^{1/2}\bigl(I+R^{1/2}V_0R^{1/2}\bigr)^{-1}R^{1/2} &=\sum_{k=0}^\infty (-1)^k R(V_0R)^k
=R\bigl(I+V_0R\bigr)^{-1}.
\end{split}
\end{equation*}
On the other hand, we have
$$ R(\o,H_0)=R-R(\o,H_0)V_0R, \quad R(\o,H_0)=R\bigl(I+V_0R\bigr)^{-1}. 
$$

Note that under assumption on $p$ \eqref{hyp1} ${\rm Dom} (-\dd)^{1/2}\subset {\rm Dom} V_2$
by the Sobolev embedding theorem, so we can write
$$ \ovl{V_2R(\o,H_0)V_1}=V_2R^{1/2}\bigl(I+R^{1/2}V_0R^{1/2}\bigr)^{-1} \ovl{R^{1/2}V_1}, $$
and $ii)$ follows from \eqref{e307}, \eqref{re3} and the choice of $\o_0$ \eqref{omega}.

To prove $iii)$, we apply the basic resolvent identity \eqref{resid}. In view of the Schatten norm version of H\"older's inequality (see, e.g., \cite[Theorem 2.8]{si1}) and \eqref{e3031}, we have
\begin{equation*}
\|R(\o,H)-R(\o,H_0)\|_\csp\le 2\|V_2R(\o,H_0)\|^2_\cspp.
\end{equation*}
Next, it follows from \eqref{re1} that
\begin{equation*}
V_2R(\o,H_0)=V_2R^{1/2}\bigl(I+R^{1/2}V_0R^{1/2}\bigr)^{-1}R^{1/2},
\end{equation*}
and so in view of \eqref{e307}
\begin{equation*}
\|V_2R(\o,H_0)\|^2_\cspp\le \frac4{|\o|}\,\|V_2R(\o,-\dd)\|_\cspp^2
\le 4\eta^2(p,d)\frac{\|V\|_p}{|\o|^{q+1}}\,.
\end{equation*}
The proof is complete.
\end{proof}

\section{Distortion for linear fractional transformations}
\label{s2}

To obtain the lower bound for the left-hand side of \eqref{hans}, we proceed with
the following distortion lemma for linear fractional transformations of the form
\begin{equation}\label{lft}
\l_\o(z):=\frac1{z-\o}\,, \quad \o\in\br.
\end{equation}
The proof of the below lemma is rather computational.


\begin{figure}[b]
\includegraphics[width=13cm]{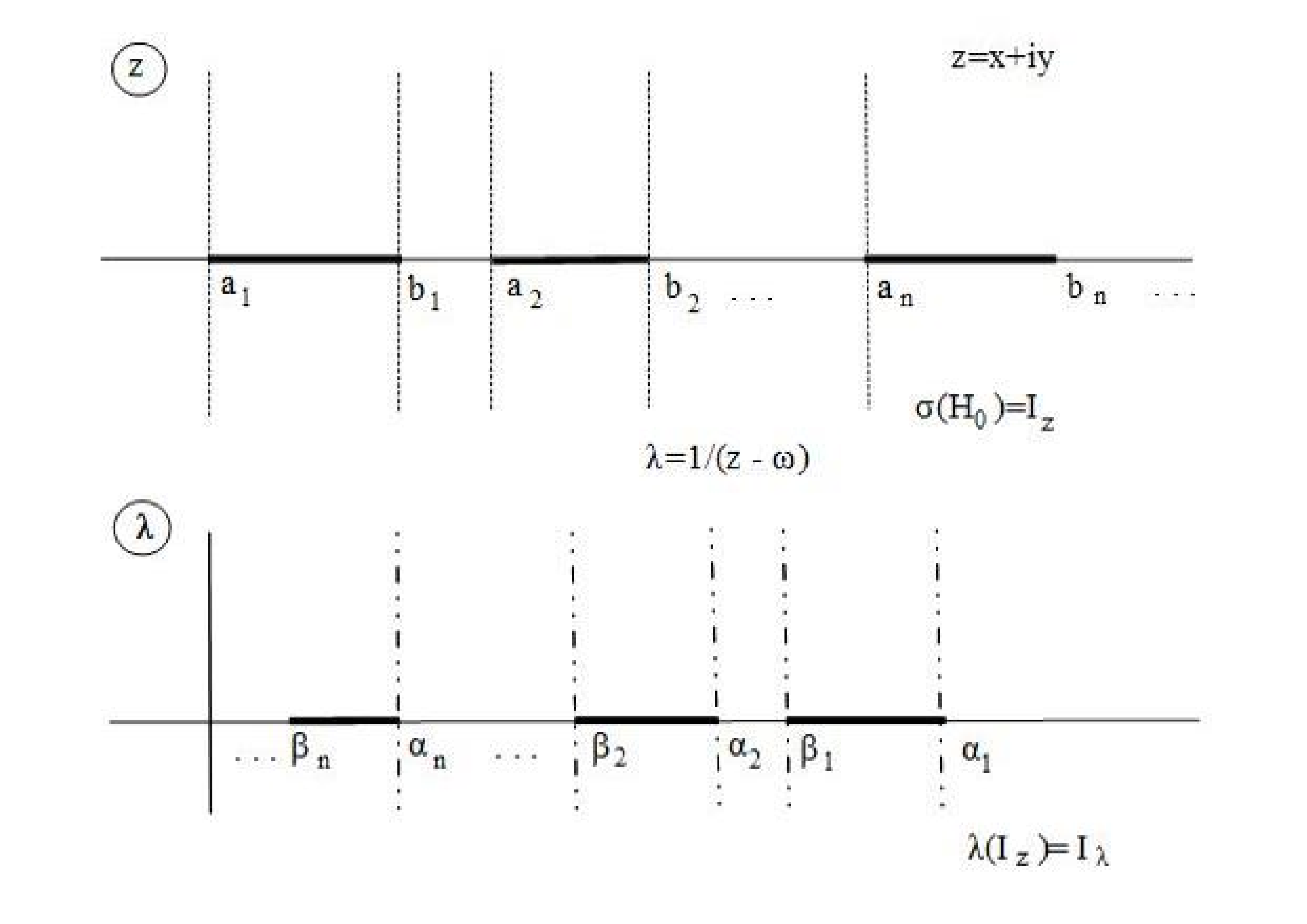}
\caption{Sets $I=\s(H_0)$ and $\l_\o(I)$ with map $\l_\o(z)=\frac 1{z-\o}$.}\label{f1}
\end{figure}

\begin{lemma}\label{l1}
Let
 \begin{equation}\label{infiniteband}
 I=I_z=\bigcup^{\infty}_{k=1}[a_k,b_k], \quad
 0<a_1<b_1<a_2<b_2<\ldots, \quad a_n\to+\infty,
 \end{equation}
and let $\l_\o(I)=I_\l$ be its image under the linear fractional transformation $\eqref{lft}$
\begin{equation*}
 \l_{\o}(I)=I_\l=\bigcup^{\infty}_{k=1}[\b_k(\o), \a_k(\o)], \ \ \b_k(\o)=\frac1{b_k-\o}\,, \ \  \a_k(\o)=\frac1{a_k-\o}\,.
\end{equation*}
Then for $\o<a_1$ the following bounds hold:

for $\re z<a_1$ or $\re z\in I$
\begin{equation}\label{distor1}
\frac{\di(\l_\o(z),\l_\o(I)}{\di(z,I_z)}>\frac1{3|z-\o|(|z-\o|+a_1-\o)}\,;
\end{equation}

for $b_k<\re z<a_{k+1}$, \ $k=1,2,\ldots$
\begin{equation}\label{distor2}
\frac{\di(\l_\o(z),\l_\o(I)}{\di(z,I_z)}
\ge\frac1{2|z-\o|^2}\,\left(1+\frac{a_{k+1}-b_k}{b_k-\o}\right)^{-1}\,.
\end{equation}
Moreover, if $\o<0$ and the gaps are relatively bounded $\eqref{irelbound}$,
then the unique bound is valid
\begin{equation}\label{distor3}
\frac{\di(\l_\o(z),\l_\o(I)}{\di(z,I_z)}\ge
\frac1{5(1+r(I))}\,\frac1{|z-\o|(|z-\o|+a_1-\o)}, \ \
z\in\bc\backslash I.
\end{equation}
\end{lemma}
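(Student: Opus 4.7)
The proof rests on the exact Möbius identity
\begin{equation*}
|\l_\o(z) - \l_\o(w)| \,=\, \frac{|z-w|}{|z-\o|\,|w-\o|}, \qquad z, w \in \bc,
\end{equation*}
immediate from $\l_\o(z)-\l_\o(w) = -(z-w)/[(z-\o)(w-\o)]$. Consequently
\begin{equation*}
\di(\l_\o(z), \l_\o(I)) \,=\, \frac{1}{|z-\o|}\,\inf_{w\in I}\frac{|z-w|}{|w-\o|},
\end{equation*}
and all three bounds reduce to estimating this infimum from below by a suitable multiple of $\di(z,I)$.

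For \eqref{distor1} I plan a two-regime split of the infimum. For $w\in I$ with $|w-\o|\le 2|z-\o|$, the bound $|z-w|/|w-\o|\ge\di(z,I)/(2|z-\o|)$ is trivial; for $|w-\o|>2|z-\o|$, the reverse triangle inequality $|z-w|\ge|w-\o|-|z-\o|>|w-\o|/2$ gives $|z-w|/|w-\o|\ge 1/2$. Hence the infimum is at least $\min(\di(z,I)/(2|z-\o|),\,1/2)$; a brief case distinction exploiting $\di(z,I)\le|z-a_1|\le|z-\o|+(a_1-\o)$ converts this minimum into $\ge\di(z,I)/[3(|z-\o|+a_1-\o)]$, and dividing by $|z-\o|$ gives \eqref{distor1}.

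The sharper gap estimate \eqref{distor2} requires analysing the real scalar function $f(w):=|z-w|/(w-\o)$ on $(\o,\infty)$. A short computation of $(f^2)'$ exhibits a unique critical point $w_*:=\re z+(\im z)^2/(\re z-\o)$: $f$ is strictly decreasing on $(\o,w_*)$ and strictly increasing on $(w_*,\infty)$, with $f(w_*)=|\im z|/|z-\o|$. When $\re z\in(b_k,a_{k+1})$ one has $w_*\ge\re z>b_k$, so monotonicity yields $\inf_{w\in I\cap(-\infty,b_k]}f(w)=f(b_k)\ge\di(z,I)/(b_k-\o)$. On the complementary side one distinguishes three subcases according to the position of $w_*$: (i) $a_{k+1}\ge w_*$, infimum at $a_{k+1}$; (ii) $a_{k+1}<w_*\in I$, infimum equals $|\im z|/|z-\o|$; (iii) $w_*$ lies in a later gap, infimum at a band endpoint $b_j$ or $a_{j+1}$ with $j\ge k+1$, still larger than $f(w_*)$. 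In each subcase one obtains $\inf_{w\in I,\,w\ge a_{k+1}}f(w)\ge\di(z,I)(b_k-\o)/[2|z-\o|(a_{k+1}-\o)]$, the delicate subcase (ii) being handled by squaring and using the defining inequality $(\im z)^2>(\re z-\o)(a_{k+1}-\re z)$ together with $\di(z,I)\le|z-a_{k+1}|$. Dividing by $|z-\o|$ and writing $a_{k+1}-\o=(b_k-\o)(1+r_k/(b_k-\o))$ then produces \eqref{distor2}.

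Finally, \eqref{distor3} is obtained by merging the two previous estimates under the relative-boundedness hypothesis \eqref{irelbound}: where \eqref{distor1} applies, its constant $3$ is absorbed into $5(1+r(I))$; in a gap, \eqref{distor2} combined with $\o<0$ gives $r_k/(b_k-\o)<r_k/b_k\le r(I)$, while $|z-\o|^2\le|z-\o|(|z-\o|+a_1-\o)$ reshapes the prefactor. The main technical obstacle is the casework in \eqref{distor2}, specifically the situation where the interior minimizer $w_*$ lies inside $I$: one must verify that $|\im z|/|z-\o|$ still dominates the target bound, which after elementary algebra reduces to the inequalities $a_{k+1}-\re z\le r_k$ and $a_{k+1}-\o\ge b_k-\o$ being enough to control the defect.
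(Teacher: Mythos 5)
Your proof is correct. For \eqref{distor2} it is essentially the paper's argument in different clothing: the authors reduce to $\o=0$ and reason geometrically on the image circle of the vertical line $\re\zeta=\re z$, splitting according to whether $\l$ lies over a gap or over a band of $I_\l$; their auxiliary function $h(t,z)=|z-t|/t$ is your $f$ at $\o=0$, and their ``band'' case is precisely your subcase where the minimizer $w_*=\re z+(\im z)^2/(\re z-\o)$ falls in $I$, with minimal value $|\im z|/|z-\o|$. (I checked that your claimed bound $\inf_{w\in I,\,w\ge a_{k+1}}f(w)\ge\di(z,I)(b_k-\o)/\bigl(2|z-\o|(a_{k+1}-\o)\bigr)$ does go through in the delicate subcase, using $(\im z)^2>(\re z-\o)(a_{k+1}-\re z)$, $a_{k+1}-\re z<r_k$ and the AM--GM inequality, as you indicate.) Where you genuinely diverge is \eqref{distor1}: the two-regime split $|w-\o|\le 2|z-\o|$ versus $|w-\o|>2|z-\o|$, applied to the exact identity $|\l_\o(z)-\l_\o(w)|=|z-w|/(|z-\o|\,|w-\o|)$, does not appear in the paper, and it buys more than you claim. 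It nowhere uses the restriction $\re z<a_1$ or $\re z\in I$, so it proves
\begin{equation*}
\di(\l_\o(z),\l_\o(I))\ \ge\ \frac{\di(z,I)}{2\,|z-\o|\,(|z-\o|+a_1-\o)}
\qquad\text{for all } z\in\bc\setminus I,\ \o<a_1,
\end{equation*}
that is, \eqref{distor3} with the better constant $1/2$ in place of $1/(5(1+r(I)))$ and with no assumption on the gaps. Since only \eqref{distor3} is invoked later (in Proposition \ref{p1}), your argument would let one drop the relative-boundedness hypothesis \eqref{irelbound} from Theorem \ref{t1} altogether; it is worth stating this stronger form explicitly rather than folding it back into the weaker bounds of the lemma.
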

\begin{proof}
Let us begin with the case $\o=0$ and put $\l_0=\l=z^{-1}$. If
$z=x+iy$ and $x=\re z\le0$, then $\re\l=x|z|^{-2}\le0$ and so
\begin{equation}\label{below1}
\frac{\di(\l,I_\l)}{\di(z,I_z)}=\frac{|\l|}{|z-a_1|}=\frac1{|z||z-a_1|}\ge
\frac1{|z|(|z|+a_1)}\,.
\end{equation}
Similarly, if $x\in I_z$, then $x\ge a_1$ and
$$ 0<\re\l=\frac{x}{|z|^2}\le\frac1{x}\le
a_1^{-1}=\a_1, \quad \di(\l,[0,\a_1])=|\im\l|=\frac{|y|}{|z|^2}\,.
$$
Since now $\di(z,I_z)=|y|$, we have
\begin{equation}\label{below2}
\frac{\di(\l,I_\l)}{\di(z,I_z)}\ge
\frac{\di(\l,[0,\a_1])}{\di(z,I_z)}=\frac1{|z|^2}>\frac1{|z|(|z|+a_1)}\,.
\end{equation}

\smallskip

Consider now the case when $x=\re z\notin I_z$. Fix $x$ in $k$'s
gap,
\begin{equation}\label{gapsforz}
b_k<x<a_{k+1}, \qquad k=k(x)=0,1,\ldots \end{equation} (we put
$b_0=0$ and treat $(b_0,a_1)$ as a number zero gap). Then
$$ \di(z,I_z)=\min(|z-b_k|,|z-a_{k+1}|), \quad k=1,2,\ldots, \quad \di(z,I_z)=|z-a_1|,
\quad k=0. $$
Define two sets of positive numbers
$$ u_j=u_j(x), \qquad v_j=v_j(x), \qquad j=k+1,k+2,\ldots $$ by equalities
\begin{equation*}
\re(\l(x+iu_j))=\frac{x}{x^2+u_j^2}=\a_j, \qquad
\re(\l(x+iv_j))=\frac{x}{x^2+v_j^2}=\b_j,
\end{equation*}
or, equivalently,
$$ u_j(x)=\sqrt{x(a_j-x)}, \qquad v_j(x)=\sqrt{x(b_j-x)}. $$
We also put $v_k=0$, so
$$ 0=v_k<u_{k+1}<v_{k+1}<u_{k+2}<v_{k+2}<\ldots, \quad u_n,\ v_n\to\infty, \ n\to\infty. $$

While the point $z$ traverses the line $x+iy$, $y\in\br$, its
image $\l(z)$ describes a circle with diameter $[0,1/x]$. We
discern the following two cases.

\smallskip
{\bf Case 1}. Assume that $\l$ lies over the ``gaps for $\l$''.
For each $k=0,1,\ldots$ there are two options for $\l$: interior
gaps
\begin{equation}\label{intgap}
\re\l\in(\a_{j+1},\b_j)\Longleftrightarrow v_j<|y|<u_{j+1}, \qquad
j=k+1,k+2,\ldots
\end{equation}
and the rightmost gap
\begin{equation}\label{rightgap}
\re\l\in(\a_{k+1},1/x)\Longleftrightarrow 0<|y|<u_{k+1}.
\end{equation}
For gaps \eqref{intgap} we have
 \begin{equation}\label{intgap1}
\di(\l,I_\l)=\min(|\l-\a_{j+1}|,
|\l-\b_j|)=\frac1{|z|}\min\left(\frac{|z-a_{j+1}|}{a_{j+1}},
\frac{|z-b_j|}{b_j}\right)\,.
\end{equation}
Define an auxiliary function $h$ on the right half-line
$$
h(t)=h(t,z):=\frac{|z-t|}{t}=\sqrt{\Bigl(\frac{x}{t}-1\Bigr)^2+y^2},
\quad t>0. $$
 Clearly, $h$ is monotone increasing on $(x,+\infty)$ and
 decreasing on $(0,x)$ with the minimum $h(x)=|y|$. Hence
 \eqref{intgap1} and \eqref{gapsforz} give
\begin{equation*}
\begin{split}
\di(\l,I_\l) &=\frac{\min(h(a_{j+1},z), h(b_j,z))}{|z|}\ge
 \frac{h(b_{j},z)}{|z|}\ge \frac{h(b_{k+1},z)}{|z|} \\ &\ge\frac{h(a_{k+1},z)}{|z|}=
 \frac{|z-a_{k+1}|}{a_{k+1}|z|}\,.
 \end{split}
 \end{equation*}
 Since by \eqref{gapsforz} $\di(z,I_z)\le |z-a_{k+1}|$, we see that
 \begin{equation}\label{bound1}
 \frac{\di(\l,I_\l)}{\di(z,I_z)}\ge \frac1{a_{k+1}|z|}\,.
 \end{equation}

For gaps \eqref{rightgap} let first $k\ge1$. Then as above in
\eqref{intgap1}
 \begin{equation*}
\di(\l,I_\l)=\frac1{|z|}\min\left(\frac{|z-a_{k+1}|}{a_{k+1}},
\frac{|z-b_k|}{b_k}\right)\,,
\end{equation*}
but it is not clear now which term prevails. If
$|z-a_{k+1}|\le|z-b_k|$ then $\di(z,I_z)=|z-a_{k+1}|$ and
$$
\frac{\di(\l,I_\l)}{\di(z,I_z)}=\frac1{|z|}\min\left(\frac1{a_{k+1}},
\frac{|z-b_k|}{b_k|z-a_{k+1}|}\right)=\frac1{a_{k+1}|z|}\,.
$$
Otherwise $|z-a_{k+1}|>|z-b_k|$ implies
$$
\frac{\di(\l,I_\l)}{\di(z,I_z)}=\frac1{|z|}\min\left(\frac1{b_{k}},
\frac{|z-a_{k+1}|}{a_{k+1}|z-b_{k}|}\right)\ge\frac1{a_{k+1}|z|}\,.
$$

Next, for $k=0$ one has $0<x<a_1$, and in case \eqref{rightgap}
$$ \di(\l,I_\l)=|\l-\a_1|=\frac{|z-a_1|}{a_1|z|}\,, \quad
\di(z,I_z)=|z-a_1|, $$ and so
\begin{equation}\label{kzero}
\frac{\di(\l,I_\l)}{\di(z,I_z)}=\frac1{a_1|z|}\,.
\end{equation}
 Finally, in the case of ``gaps for $\l$'' we come to the bound
\begin{equation}\label{gapsforl}
\frac{\di(\l,I_\l)}{\di(z,I_z)}\ge \frac1{a_{k+1}|z|}\,, \qquad
k=0,1,\ldots.
\end{equation}

A modified version of \eqref{gapsforl} will be convenient in the
sequel. For $k\ge1$ in view of $|z|\ge x>b_k$ we have
$$
\frac1{a_{k+1}|z|}\ge\frac{b_k}{a_{k+1}|z|^2} $$ and so for
$k=1,2,\ldots$
\begin{equation}\label{gapsforl1}
\frac{\di(\l,I_\l)}{\di(z,I_z)}\ge \frac1{|z|^2}\,
\left(1+\frac{a_{k+1}-b_k}{b_k}\right)^{-1}=\frac1{|z|^2}\left(1+\frac{r_k}{b_k}\right)^{-1}\,,
\end{equation}
$r_k=a_{k+1}-b_k$ is the length of $k$'s gap. Similarly, for $k=0$
one has from \eqref{kzero}
\begin{equation}\label{gapsforl2}
\frac{\di(\l,I_\l)}{\di(z,I_z)}\ge \frac1{|z|(|z|+a_1)}\,.
\end{equation}

\smallskip
{\bf Case 2}. Assume that $\l$ lies over the ``bands for $\l$''
\begin{equation}\label{bands1}
\re\l\in[\b_j,\a_{j}]\Longleftrightarrow u_j\le |y|\le v_{j},
\qquad j=k+1,k+2,\ldots.
\end{equation}
Now
$$ \di(\l,I_\l)=|\im\l|=\frac{|y|}{|z|^2}\,, $$
\begin{equation*}
\begin{split}
\di(z,I_z) &\le |z-a_{k+1}|\le
|y|+a_{k+1}-x=|y|+\frac{u_{k+1}^2}{x} \le
|y|\left(1+\frac{u_{k+1}}{x}\right)
\\ &=|y|\left(1+\sqrt{\frac{a_{k+1}-x}{x}}\right),
\end{split}
\end{equation*}
so that
\begin{equation}\label{bands2}
\frac{\di(\l,I_\l)}{\di(z,I_z)}\ge \frac1{|z|^2}\,
\left(1+\sqrt{\frac{a_{k+1}}{x}-1}\right)^{-1}\,.
\end{equation}

For $k\ge1$ (interior gap for $z$) inequality \eqref{bands2} can
be simplified in view of $x>b_k$
\begin{equation}\label{bands3}
\frac{\di(\l,I_\l)}{\di(z,I_z)}\ge \frac1{|z|^2}\,
\left(1+\sqrt{\frac{r_k}{b_k}}\right)^{-1}\,.
\end{equation}
Let now $k=0$, i.e., $0<x=\re z<a_1$. In our case
$\di(z,I_z)=|z-a_1|$ and
$$ |y|\ge u_1=\sqrt{x(a_1-x)}. $$
If $|y|\ge 2x$ then $|y|\ge \frac23|z|$ and so
\begin{equation}\label{bands4}
\frac{\di(\l,I_\l)}{\di(z,I_z)}=\frac{|y|}{|z|^2|z-a_1|}\ge
\frac23 \frac1{|z|(|z|+a_1)}\,.
\end{equation}
Otherwise, $|y|<2x$ implies
$$ 2\sqrt{x}>\sqrt{a_1-x}, \qquad x>\frac{a_1}5. $$
It follows now from \eqref{bands2} with $k=0$ that
\begin{equation}\label{bands5}
\frac{\di(\l,I_\l)}{\di(z,I_z)}\ge \frac1{3|z|^2}>\frac13
\frac1{|z|(|z|+a_1)}\,.
\end{equation}

We can summarize the results obtained above in the following two
bounds from below. A combination of \eqref{below1},
\eqref{below2}, \eqref{gapsforl2} and \eqref{bands5} gives
\begin{equation}\label{final1}
\frac{\di(\l,I_\l)}{\di(z,I_z)}>\frac1{3|z|(|z|+a_1)}\,, \qquad
\re z<a_1 \ \ {\rm or} \ \re z\in I_z.
\end{equation}
A combination of \eqref{gapsforl1} and \eqref{bands3} provides
\begin{equation}\label{final2}
\begin{split}
\frac{\di(\l,I_\l)}{\di(z,I_z)} &\ge\frac1{\g_k|z|^2}\,, \qquad
\g_k =\max\left\{1+\frac{r_k}{b_k},\
1+\sqrt{\frac{r_k}{b_k}}\right\}\,, \\ b_k&<\re z<a_{k+1}, \quad
k=1,2,\ldots.
\end{split}
\end{equation}
Since $\g_k<2(1+r_k/b_k)$, the latter can be written as
\begin{equation}\label{final3}
\frac{\di(\l,I_\l)}{\di(z,I_z)}
\ge\frac1{2|z|^2}\,\left(1+\frac{r_k}{b_k}\right)^{-1}\,, \quad
b_k<\re z<a_{k+1}, \quad k=1,2,\ldots.
\end{equation}

To work out the general case $\o\not=0$ and prove \eqref{distor1}
and \eqref{distor2}, it remains only to shift the variable and
apply the results just obtained to the shifted sequence of bands
$$ I_z(\o)=\bigcup_{k\ge1}[a_k-\o,b_k-\o]. $$
The final statement follows from a simple observation
$$ \frac{r_k}{b_k-\o}\le\frac{r_k}{b_k}\le r. $$
The proof is complete.
\end{proof}

\section{Lieb--Thirring type inequalities}
\label{s3}

We continue with Hansmann's inequality \eqref{hans} and the upper (lower) bounds for its right (left)
hand sides obtained in previous sections. In what follows,
$C_k=C_k(p,d,I)$, $k=1,2,\ldots$, denote positive constants, which depend on $p,d$, and the
set $I$ \eqref{infband}.

\begin{prop}\label{p1} Under conditions of Theorem \ref{t1}, for $\o\le\o_0$ defined
in $\eqref{omega}$, we have
\begin{equation}\label{e310}
\sum_{z\in\s_d(H)} \frac{\di^p(z,I)}{|z-\o|^p (|z-\o|+a_1-\o)^p}\le  C_1\,
\frac{\|V\|^p_p}{|\o|^{(q+1)p}}, \quad \o\le\o_0.
\end{equation}
\end{prop}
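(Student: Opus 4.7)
The proof is a direct assembly of the tools developed in Sections~1 and~2. The plan is as follows.

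First, I would set $A_0:=R(\o,H_0)$ and $A:=R(\o,H)$ for $\o\le\o_0$. Since $\o_0<0<a_1$, the point $\o$ lies in $\rho(H_0)$ and $A_0$ is a bounded self-adjoint operator. Lemma~\ref{l2}\,$ii)$ ensures that $[I+\ovl{V_2R(\o,H_0)V_1}]$ is boundedly invertible, hence by the resolvent identity \eqref{resid} one has $\o\in\rho(H)$ and $A$ is a bounded (non-selfadjoint) operator. By Lemma~\ref{l2}\,$iii)$, the resolvent difference lies in $\csp$ with the explicit bound
\begin{equation*}
\|A-A_0\|_\csp\le 4\eta^2(p,d)\,\frac{\|V\|_p}{|\o|^{q+1}}.
\end{equation*}

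Second, I would apply Hansmann's inequality \eqref{hans} to the pair $(A_0,A)$ and immediately obtain
\begin{equation*}
\sum_{\l\in\s_d(A)}\di^p(\l,\s(A_0))\le K\lp 4\eta^2(p,d)\rp^p\,\frac{\|V\|_p^p}{|\o|^{(q+1)p}}.
\end{equation*}
The translation back to $H$ uses the spectral mapping for resolvents: $\s_d(A)=\{\l_\o(z):z\in\s_d(H)\}$ with $\l_\o(z)=(z-\o)^{-1}$, and $\s(A_0)=\l_\o(I)\cup\{0\}$. Because $a_k\to+\infty$, the point $0$ already lies in the closure of $\l_\o(I)$, so $\s(A_0)=\ovl{\l_\o(I)}$ and the Euclidean distance from $\l_\o(z)$ to $\s(A_0)$ coincides with the distance to $\l_\o(I)$ itself.

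Finally, I would invoke the distortion bound \eqref{distor3} of Lemma~\ref{l1}, which is available because $\o<0$ and the gap sequence of $I$ is relatively bounded, see \eqref{irelbound}:
\begin{equation*}
\di(\l_\o(z),\l_\o(I))\ge \frac1{5(1+r(I))}\cdot\frac{\di(z,I)}{|z-\o|(|z-\o|+a_1-\o)},\qquad z\in\bc\bsl I.
\end{equation*}
Raising this to the power $p$, substituting it into the left-hand side of Hansmann's inequality, summing over $z\in\s_d(H)$ and absorbing $[5(1+r(I))]^p$ together with $K(4\eta^2(p,d))^p$ into a single constant $C_1=C_1(p,d,I)$ yields \eqref{e310}. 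I do not foresee a serious obstacle: the whole argument is bookkeeping, the only small care being (a) verifying $\o\in\rho(H)$ via Lemma~\ref{l2}\,$ii)$, and (b) identifying $\s(A_0)=\ovl{\l_\o(I)}$ so that Lemma~\ref{l1} supplies a lower bound on $\di(\l_\o(z),\s(A_0))$ and not merely on $\di(\l_\o(z),\l_\o(I))$.
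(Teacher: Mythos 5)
Your proposal is correct and follows essentially the same route as the paper: Hansmann's inequality \eqref{hans} applied to the pair of resolvents, the Schatten-norm bound of Lemma \ref{l2}\,$iii)$ for the right-hand side, and the distortion bound \eqref{distor3} of Lemma \ref{l1} to pull the estimate back to $\s_d(H)$. Your extra remark that $0\in\ovl{\l_\o(I)}$ (so that $\s(A_0)$ and $\l_\o(I)$ give the same distances) is a small point the paper leaves implicit, and it is handled correctly.
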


\begin{proof}
We apply \eqref{hans} with
$$ A_0=A_0(\o)=R(\o,H_0), \qquad A=A(\o)=R(\o,H).
$$
So, by Lemma \ref{l2}, with $I_\l=\l_\o(I)=\s(A_0)$
\begin{equation}\label{e102}
\begin{split}
&\sum_{\l\in\s_d(A)} \di^p(\l, I_\l) = \sum_{\l\in\s_d(A)} \di^p(\l, \s(A_0)) \\
&\le K\,\|R(\o, H)- R(\o, H_0)\|^p_\csp \le C_2\,\frac{\|V\|^p_p}{|\o|^{(q+1)p}}\,.
\end{split}
\end{equation}
Lemma \ref{l1} completes the proof of \eqref{e310} as
$$
\sum_{z\in\s_d(H)} \frac{\di^p(z,I)}{|z-\o|^p (|z-\o|+a_1-\o)^p}\le C_3(1+r(I))^p\,
\frac{\|V\|^p_p}{|\o|^{(q+1)p}}= C_4\,\frac{\|V\|^p_p}{|\o|^{(q+1)p}}.
$$
\end{proof}

\nt {\it Proof of Theorem \ref{t1}.}  The idea of the proof is to use the above proposition
and a ``convergence improving trick'' from \cite[p. 2754]{dhk}.

Put $\a:=p(q+1)-1-\tau>0$, and rewrite inequality \eqref{e310} in the form
$$
\sum_{z\in\s_d(H)} \frac{\di^p(z,I) \cdot s^\a}{|z+s|^p (|z+s|+a_1+s)^p}
\le C_1\,\frac{\|V\|^p_p}{s^{1+\tau}}, 
$$
where $s:=|\o|\ge s_0:=|\o_0|$. Observe that
$$ |z+s|\le |z|+s, \quad |z+s|+a_1+s\le 2s+|z|+a_1, $$
and so
$$
\sum_{z\in\s_d(H)} \frac{\di^p(z,I) \cdot s^\a}{(s+|z|)^p (2s+|z|+a_1)^p}
\le C_1\,\frac{\|V\|^p_p}{s^{1+\tau}}\,.
$$

Next, integrate the latter inequality with respect to $s$ from $s_0$ to infinity
and change the order of summation and integration
\begin{equation*}
\sum_{z\in\s_d(H)} \di^p(z,I)\, \int^\infty_{s_0} \frac{s^\a\, ds}{(s+|z|)^p (2s+|z|+a_1)^p}
\le  C_1\,\frac{\|V\|^p_p}{\tau s_0^\tau}.
\end{equation*}
The integral in the left-hand side converges, since
$$\a>0, \qquad 2p-\a-1=d/2+\tau>0. $$
Making the change of variables $s=(|z|+s_0)t+s_0$ and noticing that
$$ 2s+|z|+a_1= 2(|z|+s_0)t+2s_0+|z|+a_1\le 3(|z|+s_0)(t+1) $$
(see \eqref{omega}), we come to
\begin{equation*}
\begin{split}
\int^\infty_{s_0} \frac{s^\a\, ds}{(s+|z|)^p (2s+|z|+a_1)^p} &\ge
\frac1{3^p(|z|+s_0)^{2p-\a-1}}\,\int_0^\infty \frac{t^\a\,dt}{(1+t)^{2p}} \\ &=
\frac{C(p,d,\tau)}{(|z|+s_0)^{d/2+\tau}}\,,
\end{split}
\end{equation*}
which gives \eqref{e104}. The proof of the theorem is complete.
\hfill $\Box$

\medskip
\nt{\it Proof of Corollary \ref{c1}.}
Certainly,  $|z|+s_0\le (1+s_0)(1+|z|)$, and $(1+s_0)/s_0\le 2$. Hence
\begin{equation}\label{e312}
\begin{split}
\sum_{z\in\s_d(H)} \frac{\di^p(z,J)}{(1+|z|)^{d/2+\tau}} &\le C_2 (1+s_0)^{d/2}
\lp\frac{1+\o_0}{\o_0}\rp^\tau \|V\|^p_p \\ &\le C(p,d,I,\tau) (1+|\o_0|)^{d/2}\,\|V\|^p_p,
\end{split}
\end{equation}
as claimed. Furthermore, by \eqref{omega}
$$
1+s_0\le C_3 (1+\|V_0\|_\infty)(1+\|V\|^{1/q}_p)\le C_3 (1+\|V_0\|_\infty)(1+\|V\|_p)^{1/q},
$$
and inequality \eqref{e312} reads
\begin{equation*}
\sum_{z\in\s_d(H)} \frac{\di^p(z,I)}{(1+|z|)^{d/2+\tau}} \le C(p,d,I,\tau)\, (1+\|V_0\|_\infty)^{d/2}(1+\|V\|_p)^{d/2q}\, \|V\|^p_p.
\end{equation*}
It remains to note that $d/2<d/2q$, so \eqref{e3031} follows. The proof of corollary  is complete.
\hfill $\Box$

\medskip
Let us mention that inequality \eqref{e3031} is better (regarding the powers) than the corresponding
result in \cite[Theorem 0.2]{gk2}.

\medskip\nt
{\it Acknowledgments.} The authors are grateful to F. Gesztesy for a detailed account on the Kato method. They also thank M. Hansmann for valuable discussions on the subject of the article. The paper was prepared during the visit of the first author to the Institute of Mathematics of Bordeaux (IMB UMR5251) at University of Bordeaux. He  would like to thank this institution for the hospitality and acknowledges the financial support of ``IdEx of University of Bordeaux''.

\end{document}